\newtheorem{thm}{Theorem}[section]
\newtheorem{prop}[thm]{Proposition}
\theoremstyle{definition}
\newtheorem{defi}[thm]{Definition}
\newtheorem{example}[thm]{Example}
\newtheorem{examples}[thm]{Examples}
\theoremstyle{remark}
\newtheorem{remark}[thm]{Remark}
\numberwithin{equation}{section}
\newcommand{\dd}{\mathrm{d}}
\newcommand{\rr}{{\mathbb R}}
\newcommand{\nat}{{\mathbb N}}
\newcommand{\ganz}{{\mathbb Z}}
\newcommand{\tim}{{\mathbb T}}
\newcommand{\G}{{\mathbb G}}
\newcommand{\RV}{\operatorname{RV}}
\newcommand{\eqfd}{\stackrel{\rm fd}{=}}
\newcommand{\fdc}{\stackrel{\rm d}{\longrightarrow}}
\newcommand{\fdd}{\stackrel{\rm fd}{\longrightarrow}}
\newcommand{\Exp}{\mathbb E}
\begin{document}

\sloppy

\title{Dilatively semistable stochastic processes}

\author{Peter Kern}
\address{Peter Kern, Mathematical Institute, Heinrich-Heine-University D\"usseldorf, Universit\"atsstr.\ 1, D-40225 D\"usseldorf, Germany}
\email{kern\@@{}math.uni-duesseldorf.de}

\author{Lina Wedrich}
\address{Lina Wedrich, Mathematical Institute, Heinrich-Heine-University D\"usseldorf, Universit\"atsstr.\ 1, D-40225 D\"usseldorf, Germany}
\email{wedrich\@@{}math.uni-duesseldorf.de}

\thanks{This work is kindly supported by the German Academic Exchange Service (DAAD) financed by funds of the Federal Ministry of Education and Research (BMBF) under project no. 57059326}

\date{\today}

\begin{abstract}
Dilative semistability extends the notion of semi-selfsimilarity for infinitely divisible stochastic processes by introducing an additional scaling in the convolution exponent. It is shown that this scaling relation is a natural extension of dilative stability and some examples of dilatively semistable processes are given. We further characterize dilatively stable and dilatively semistable processes as limits for certain rescaled aggregations of independent processes.
\end{abstract}

\keywords{Dilative stability, semi-selfsimilarity, decomposability group, fractional L\'evy processes, aggregation models}

\subjclass[2010]{Primary 60G18; Secondary 60E07, 60F05.}

\maketitle

\baselineskip=18pt

\section{Introduction}

Let $\tim$ be either $\rr$, $[0,\infty)$ or $(0,\infty)$. Following \cite{BPK} a stochastic process $(X_t)_{t\in\tim}$ on $\rr$ is called $(\alpha,\delta)$-{\it dilatively stable} for some parameters $\alpha,\delta\in\rr$ if all its finite-dimensional marginal distributions are infinitely divisible and the scaling relation
$$\psi_{Tt_1,\ldots,Tt_k}(\theta_1,\ldots,\theta_k)=T^\delta\psi_{t_1,\ldots,t_k}(T^{\alpha-\delta/2}\theta_1,\ldots,T^{\alpha-\delta/2}\theta_1)$$
holds for all $T>0$, $k\in\nat$, $\theta_1,\ldots,\theta_k\in\rr$, and $t_1,\ldots,t_k\in\tim$, where $\psi_{t_1,\ldots,t_k}$ denotes the log-characteristic function of $(X_{t_1},\ldots,X_{t_k})$, which is the unique continuous function with $\psi_{t_1,\ldots,t_k}(0,\ldots,0)=0$ fulfilling
$$\Exp\Big[\exp\Big(i\sum_{j=1}^k\theta_jX_{t_j}\Big)\Big]=\exp\left(\psi_{t_1,\ldots,t_k}(\theta_1,\ldots,\theta_k)\right).$$
This definition extends Igl\'oi's \cite{Igloi} original formulation in the following way. Igl\'oi additionally assumes $\tim=[0,\infty)$, $X_0=0$, $X_1$ is non-Gaussian, and $X_t$ has finite moments of arbitrary order for every $t\geq0$ in which case he was able to show that the parameters $\alpha,\delta$ are uniquely determined and restricted to $\alpha>0$, $\delta\leq2\alpha$. We refuse to assume these additional conditions, since uniqueness of the parameters does not matter here. Roughly speaking, for $\delta\not=0$ dilative stability means that moving along the one-parameter semigroup $(\mu^s)_{s>0}$ generated by the finite-dimensional marginal distribution $\mu$ of $(X_{t_1},\ldots,X_{t_k})$ coincides with the distribution of the space-time transformation $s^{\frac12-\frac{\alpha}{\delta}}(X_{s^{1/\delta}t_1},\ldots,X_{s^{1/\delta}t_k})$, whereas for $\delta=0$ dilative stability coincides with selfsimilarity. Note that Kaj \cite{Kaj} introduced a weaker scaling relation called {\it aggregate-similarity}, which has been extended in Definition 1.4 of \cite{BPK} such that dilative stability and aggregate similarity essentially define the same property if one additionally assumes infinite divisibility and weak right-continuity of the finite-dimensional marginal distributions; see Proposition 1.5 in \cite{BPK} for details. 

In Section 2 we will introduce a weaker scaling property called dilative semistability which naturally comes into play assuming weak continuity. This notion extends the class of infinitely divisible semi-selfsimilar processes introduced in \cite{MaeSat1}. We give some examples of dilatively semistable process, in particular we point out how dilatively semistable generalized fractional L\'evy motions can be constructed from dilatively stable counterparts of \cite{BPK}. Finally, in Section 3 we show that in a general limit procedure for certain aggregation models, dilatively stable and dilatively semistable processes can be characterized as limit processes.

\section{Dilatively semistable processes}

Let $X=(X_t)_{t\in\tim}$ be a stochastic process on $\rr$ whose finite-dimensional marginal distributions are infinitely divisible. Inspired by Urbanik's decomposability group in \cite{Urb}, for $\alpha,\delta\in\rr$ we define the {\it dilative decomposability group} of $X$ by
$$D_X(\alpha,\delta)=\left\{c>0:\begin{array}{c}
\psi_{ct_1,\ldots,ct_k}(\theta_1,\ldots,\theta_k)=c^\delta\psi_{t_1,\ldots,t_k}(c^{\alpha-\delta/2}\theta_1,\ldots,c^{\alpha-\delta/2}\theta_k)\\[1ex]
\text{for all $k\in\nat$, $\theta_1,\ldots,\theta_k\in\rr$, and $t_1,\ldots,t_k\in\tim$}\phantom{mm}
\end{array}\right\},$$
where $\psi_{t_1,\ldots,t_k}$ again denotes the log-characteristic function of $(X_{t_1},\ldots,X_{t_k})$ and the notion ``group'' is justified as follows.
\begin{prop}\label{ddg}
If the finite-dimensional distributions of $X$ are weakly continuous then $D_X(\alpha,\delta)$ is a closed subgroup of $\G=((0,\infty),\cdot)$.
\end{prop}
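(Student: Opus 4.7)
The plan is to verify the three subgroup axioms (identity, products, inverses) as algebraic manipulations of the defining scaling relation, and then use weak continuity of the finite-dimensional distributions to conclude closedness.

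\textbf{Identity and products.} That $1\in D_X(\alpha,\delta)$ is immediate. If $c_1,c_2\in D_X(\alpha,\delta)$, I would apply the scaling relation for $c_1$ to the tuple $(c_2t_1,\ldots,c_2t_k)$ and then the relation for $c_2$ to $(t_1,\ldots,t_k)$:
$$
\psi_{c_1c_2t_1,\ldots,c_1c_2t_k}(\theta_1,\ldots,\theta_k)
=c_1^\delta\psi_{c_2t_1,\ldots,c_2t_k}\bigl(c_1^{\alpha-\delta/2}\theta_1,\ldots,c_1^{\alpha-\delta/2}\theta_k\bigr),
$$
then expand the right-hand side using $c_2\in D_X(\alpha,\delta)$. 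The exponents combine as $c_1^\delta c_2^\delta=(c_1c_2)^\delta$ and $c_1^{\alpha-\delta/2}c_2^{\alpha-\delta/2}=(c_1c_2)^{\alpha-\delta/2}$, giving exactly the relation required of $c_1c_2$.

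\textbf{Inverses.} For $c\in D_X(\alpha,\delta)$, I would substitute $s_j:=ct_j$ and $\eta_j:=c^{\alpha-\delta/2}\theta_j$ in the defining identity and solve for the left side with argument tuple $(c^{-1}s_1,\ldots,c^{-1}s_k)$:
$$
\psi_{c^{-1}s_1,\ldots,c^{-1}s_k}(\eta_1,\ldots,\eta_k)
=c^{-\delta}\psi_{s_1,\ldots,s_k}\bigl(c^{-(\alpha-\delta/2)}\eta_1,\ldots,c^{-(\alpha-\delta/2)}\eta_k\bigr).
$$
Since $c^{-\delta}=(c^{-1})^\delta$ and $c^{-(\alpha-\delta/2)}=(c^{-1})^{\alpha-\delta/2}$, this is the scaling identity for $c^{-1}$.

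\textbf{Closedness.} This is the one place where weak continuity of the finite-dimensional distributions enters, and I expect it to be the main (though mild) point of care. Let $c_n\in D_X(\alpha,\delta)$ with $c_n\to c\in(0,\infty)$. Fix $k$, $t_1,\ldots,t_k$, $\theta_1,\ldots,\theta_k$. Since $(c_nt_1,\ldots,c_nt_k)\to(ct_1,\ldots,ct_k)$ in $\tim^k$, weak continuity yields
$$
(X_{c_nt_1},\ldots,X_{c_nt_k})\ \Longrightarrow\ (X_{ct_1},\ldots,X_{ct_k}),
$$
so the corresponding characteristic functions converge pointwise, and taking continuous logarithms (well-defined by infinite divisibility and the normalisation $\psi(0)=0$) gives
$$
\psi_{c_nt_1,\ldots,c_nt_k}(\theta_1,\ldots,\theta_k)\longrightarrow\psi_{ct_1,\ldots,ct_k}(\theta_1,\ldots,\theta_k).
$$
On the right-hand side of the scaling relation for $c_n$, continuity of $\psi_{t_1,\ldots,t_k}$ in its $\theta$-arguments (again via infinite divisibility) together with $c_n^{\alpha-\delta/2}\to c^{\alpha-\delta/2}$ and $c_n^\delta\to c^\delta$ gives convergence to $c^\delta\psi_{t_1,\ldots,t_k}(c^{\alpha-\delta/2}\theta_1,\ldots,c^{\alpha-\delta/2}\theta_k)$. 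The two limits coincide, so $c\in D_X(\alpha,\delta)$, completing the proof.
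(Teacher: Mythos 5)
Your proof is correct and follows essentially the same route as the paper: closure under products by composing the two scaling relations, and closedness via weak continuity together with continuity of the (distinguished) log-characteristic functions. In fact you are slightly more careful than the paper, whose proof verifies only closure under products before declaring $D_X(\alpha,\delta)$ a subgroup; your explicit inverse step (and the trivial identity check) is genuinely needed, since a multiplicatively closed subset of $(0,\infty)$ such as $\{c^n:n\ge 1\}$ for $c>1$ is not a subgroup.
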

\begin{proof}
If $b,c\in D_X(\alpha,\delta)$ we have
\begin{align*}
\psi_{bct_1,\ldots,bct_k}(\theta_1,\ldots,\theta_k) & =b^\delta\psi_{ct_1,\ldots,ct_k}(b^{\alpha-\delta/2}\theta_1,\ldots,b^{\alpha-\delta/2}\theta_k)\\
& =(bc)^\delta\psi_{t_1,\ldots,t_k}((bc)^{\alpha-\delta/2}\theta_1,\ldots,(bc)^{\alpha-\delta/2}\theta_k)
\end{align*}
showing that $bc\in D_X(\alpha,\delta)$. Hence $D_X(\alpha,\delta)$ is a subgroup of $\G$. If $c_n\in D_X(\alpha,\delta)$, $n\in\nat$, is a sequence with $c_n\to c>0$ then our assumption on weak continuity implies
\begin{align*}
\psi_{ct_1,\ldots,ct_k}(\theta_1,\ldots,\theta_k) & =\lim_{n\to\infty}\psi_{c_nt_1,\ldots,c_nt_k}(\theta_1,\ldots,\theta_k)\\
& =\lim_{n\to\infty}c_n^\delta\psi_{t_1,\ldots,t_k}(c_n^{\alpha-\delta/2}\theta_1,\ldots,c_n^{\alpha-\delta/2}\theta_k)\\
& =c^\delta\psi_{t_1,\ldots,t_k}(c^{\alpha-\delta/2}\theta_1,\ldots,c^{\alpha-\delta/2}\theta_k)
\end{align*}
showing that $c\in D_X(\alpha,\delta)$. Hence $D_X(\alpha,\delta)$ is a closed subgroup of $\G$.
\end{proof}
Since the only non-trivial closed subgroups of $\G$ are $\G$ itself (leading to dilative stability) and $c^\ganz=\{c^m:\,m\in\ganz\}$ for some $c>1$, the following property naturally appears.
\begin{defi}\label{dss}
A stochastic process $X=(X_t)_{t\in\tim}$ is said to be $(c,\alpha,\delta)$-{\it dilatively semistable} for parameters $c>1$ and $\alpha,\delta\in\rr$ if all of its finite-dimensional marginal distributions are infinitely divisible and $c^\ganz\subseteq D_X(\alpha,\delta)$.
\end{defi}
\begin{examples}
(a) By Definition \ref{dss}, any $(\alpha,\delta)$-dilatively stable process is also $(c,\alpha,\delta)$-dilatively semistable for every $c>1$.\\ Conversely, let $X=(X_t)_{t\in\tim}$ be a weakly continuous $(b,\alpha,\delta)$ and $(c,\alpha,\delta)$-dilatively semistable process, where $b,c>1$ are incommensurable in the sense that $b^n\not=c^m$ for all $n,m\in\ganz$. Then Proposition \ref{ddg} yields $(0,\infty)=\overline{\{b^nc^m:\,n,m\in\ganz\}}\subseteq D_X(\alpha,\delta)$ showing that $X$ is $(\alpha,\delta)$-dilatively stable.

(b) Let $X=(X_t)_{t\geq0}$ be a semi-selfsimilar process with Hurst index $H>0$, i.e.
$$(X_{ct})_{t\geq0}\eqfd(c^HX_t)_{t\geq0}\quad\text{ for some }c>1,$$
where ``$\eqfd$'' denotes equality in distribution of all finite-dimensional marginal distributions. Then obviously $X$ fulfills the scaling property of a $(c,H,0)$-dilatively semistable process for which (due to $\delta=0$) infinite divisibility is not needed. Hence dilative semistability extends semi-selfsimilarity for infinitely divisible processes.

(c) Let $X=(X_t)_{t\geq0}$ be a $(c,\gamma)$-semistable L\'evy process, i.e.\ a semi-selfsimilar L\'evy process with Hurst index $H=1/\gamma$ for some $c>1$ and $\gamma\in(0,2)$. Then by semi-selfsimilarity we have
\begin{align*}
\psi_{ct_1,\ldots,ct_k}(\theta_1,\ldots,\theta_k) & =\psi_{t_1,\ldots,t_k}(c^{1/\gamma}\theta_1,\ldots,c^{1/\gamma}\theta_k)
\intertext{and on the other hand for $\delta\in\ganz$ we get}
c^\delta\psi_{t_1,\ldots,t_k}(c^{\alpha-\delta/2}\theta_1,\ldots,c^{\alpha-\delta/2}\theta_1) & = \psi_{c^\delta t_1,\ldots,c^\delta t_k}(c^{\alpha-\delta/2}\theta_1,\ldots,c^{\alpha-\delta/2}\theta_k)\\
& =\psi_{t_1,\ldots,t_k}(c^{\alpha-\delta/2+\delta/\gamma}\theta_1,\ldots,c^{\alpha-\delta/2+\delta/\gamma}\theta_k),
\end{align*}
where the first equality is due to the fact that $X$ is a L\'evy process and the second equality follows from semi-selfsimilarity. If $\frac1{\gamma}=\alpha-\frac{\delta}{2}+\frac{\delta}{\gamma}$, i.e.\ $\alpha=\frac{1-\delta}{\gamma}+\frac{\delta}{2}$ this shows that $X$ is $(c,\frac{1-\delta}{\gamma}+\frac{\delta}{2},\delta)$-dilatively semistable for every $\delta\in\ganz$. In particular, the parameters are not uniquely determined.
\end{examples}
To give a more advanced example we now turn to the class of generalized fractional L\'evy processes, extending section 2 of \cite{BPK}. Let $(L_t^{(1)})_{t\geq0}$ be a centered L\'evy process without Gaussian component, whose L\'evy measure $\phi$ fulfills $\int_{\{|x|>1\}}x^2\,\phi(dx)<\infty$, so that $\Exp[(L_t^{(1)})^2]=t\cdot\Exp[(L_1^{(1)})^2]=t\int_{\rr}x^2\,\phi(dx)$. We now consider the two-sided L\'evy process $L=(L_t)_{t\in\rr}$ with
$$L_t=L_t^{(1)}\cdot 1_{[0,\infty)}(t)-L_{(-t)-}^{(2)}\cdot 1_{(-\infty,0)}(t),$$
where $L^{(2)}$ denotes an independent copy of $L^{(1)}$; cf.\ section 2 in \cite{KM}. Marquardt \cite{Mar} has shown that in this case for any Borel-measurable function $f:\rr^2\to\rr$ such that $u\mapsto f(t,u)$ belongs to $L^2(\rr)$ for all $t\in\rr$, the integral $X_t=\int_{\rr}f(t,u)\,L(du)$ exists in $L^2(\Omega,\mathcal A,P)$. Moreover, the characteristic function of $(X_{t_1},\ldots,X_{t_k})$ takes the form 
\begin{equation}\label{cfgLp}
\Exp\left[\exp\Big(\sum_{j=1}^k\theta_jX_{t_j}\Big)\right]=\exp\left(-\int_\rr\varphi\Big(\sum_{j=1}^k\theta_jf(t_j,u)\Big)du\right),
\end{equation}
where
$$\varphi(\theta)=\int_\rr(e^{i\theta x}-1-i\theta x)\,\phi(dx)$$
is the log-characteristic function of $L^{(1)}$. The process $X=(X_t)_{t\in\rr}$ is called a {\it generalized fractional L\'evy process} with kernel function $f$ according to \cite{KM} and it is shown in the proof of Proposition 2.3 in \cite{BPK} that $X$ is infinitely divisible.
\begin{prop}\label{condDSSgfLp}
If for some $c>1$ the kernel function $f$ satisfies
 \begin{equation}\label{DSS_funct_eq}
  f(ct,c^\delta u)= c^{\alpha-\frac{\delta}{2}} f(t,u) \quad\text{ for all } t,u\in\rr
 \end{equation}
 then the generalized fractional L\'evy process $(X_t)_{t\in\rr}$ is $(c,\alpha, \delta)$-dilatively semistable.
\end{prop}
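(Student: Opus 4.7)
The strategy is to verify directly that $c$ itself lies in the dilative decomposability group $D_X(\alpha,\delta)$, using the explicit formula \eqref{cfgLp} for the joint log-characteristic functions. Once $c\in D_X(\alpha,\delta)$, the subgroup structure recorded in Proposition \ref{ddg} gives $c^\ganz\subseteq D_X(\alpha,\delta)$; combined with the infinite divisibility of the finite-dimensional marginals of $X$ noted just before the statement, dilative semistability according to Definition \ref{dss} follows at once.

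The key calculation is a single change of variables. One starts from
$$\psi_{ct_1,\ldots,ct_k}(\theta_1,\ldots,\theta_k)=-\int_\rr\varphi\Big(\sum_{j=1}^k\theta_jf(ct_j,u)\Big)\,du$$
and substitutes $u=c^\delta v$. The Jacobian contributes a factor $c^\delta$, and the kernel scaling \eqref{DSS_funct_eq} transforms the argument of $\varphi$ via $f(ct_j,c^\delta v)=c^{\alpha-\delta/2}f(t_j,v)$. Pulling the scalar $c^{\alpha-\delta/2}$ through the linear combination inside $\varphi$ converts each $\theta_j$ into $c^{\alpha-\delta/2}\theta_j$, leaving exactly $c^\delta\psi_{t_1,\ldots,t_k}(c^{\alpha-\delta/2}\theta_1,\ldots,c^{\alpha-\delta/2}\theta_k)$. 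It is important that $\varphi$ itself is a fixed function that is merely evaluated at the new argument, so no additional factors arise from $\varphi$. Measurability and integrability needed to justify the substitution are automatic from the setting, since $f(t,\cdot)\in L^2(\rr)$ for each $t$ and the integral in \eqref{cfgLp} is absolutely convergent by construction of the generalized fractional L\'evy process.

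The only nuance is to pass from $\{c\}$ to $c^\ganz$. Iteration of the just-established identity handles positive powers of $c$; for negative powers one observes that reparametrising $t_j\mapsto t_j/c$ and $\theta_j\mapsto c^{-(\alpha-\delta/2)}\theta_j$ in the scaling identity yields the analogous relation with $c$ replaced by $c^{-1}$, so that $c^{-1}\in D_X(\alpha,\delta)$. There is no real obstacle here: the whole argument is bookkeeping of the two exponents $c^\delta$ (from the Jacobian) and $c^{\alpha-\delta/2}$ (from the kernel scaling), arranged so that the former becomes the outside prefactor and the latter the inner dilation, in precise agreement with the defining relation of $D_X(\alpha,\delta)$.
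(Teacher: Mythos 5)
Your proposal is correct and follows essentially the same route as the paper: the identity is verified for $c$ by the substitution $u=c^\delta v$ combined with the kernel scaling \eqref{DSS_funct_eq}, and then extended to all of $c^\ganz$. The only cosmetic difference is that you iterate and invert the relation by hand, whereas the paper simply invokes the group structure from Proposition \ref{ddg}.
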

\begin{proof}
By \eqref{cfgLp} the log-characteristic function of $(X_{t_1}, \ldots, X_{t_k})$ has the form
$$\psi_{t_1,\ldots,t_k}(\theta_1, \ldots, \theta_k)=-\int_\rr \varphi\Big(\sum_{j=1}^k \theta_j f(t_j, u)\Big)du$$
and hence using \eqref{DSS_funct_eq} and a change of variables $s=c^\delta u$ we get
 \begin{align*}
 & c^\delta\psi_{t_1,\ldots,t_k}(c^{\alpha-\frac{\delta}{2}} \theta_1, \ldots,c^{\alpha-\frac{\delta}{2}} \theta_k)
 = -c^\delta\int_\rr\varphi\Big(\sum_{j=1}^k c^{\alpha-\frac{\delta}{2}} \theta_j f(t_j, u)\Big)du\\
  & \quad= -c^\delta\int_\rr\varphi\Big(\sum_{j=1}^k \theta_j f(ct_j, c^\delta u)\Big)du
  = -\int_\rr \varphi\Big(\sum_{j=1}^k \theta_j f(ct_j, s)\Big)ds \\
  & \quad= \psi_{ct_1,\ldots,ct_k}(\theta_1, \ldots, \theta_k)
 \end{align*}
 showing that $c\in D_X(\alpha,\delta)$. By Proposition \ref{ddg} we get $c^\ganz\subseteq D_X(\alpha,\delta)$ which yields the assertion.
\end{proof}
\begin{remark}
In section 2 of \cite{BPK} explicit examples of generalized fractional L\'evy processes that are dilatively stable are given. By Proposition 2.3 in \cite{BPK}, a sufficient condition for dilative stability is that the kernel function fulfills the scaling relation
 \begin{equation}\label{DS_funct_eq}
  f(Tt,T^\delta u)= T^{\alpha-\frac{\delta}{2}} f(t,u) \quad\text{ for all } t,u\in\rr\text{ and }T>0,
 \end{equation}
which is slightly stronger than \eqref{DSS_funct_eq}. Note that for any $c>1$ and $\delta>0$ we can directly generate examples of dilatively semistable generalized fractional L\'evy processes (that are not dilatively stable) using the functions
$$f_c(t,u)=f\left(c^{\lfloor\log_ct\rfloor},c^{\delta\lfloor\log_{c^\delta}u\rfloor}\right)\quad \text{ for }t,u\in\rr,$$
where $f$ fulfills \eqref{DS_funct_eq}, provided that $f_c$ is still a valid kernel function. Indeed, by \eqref{DS_funct_eq} we have for all $t,u\in\rr$
\begin{align*}
f_c(ct,c^\delta u) & =f\left(c^{1+\lfloor\log_ct\rfloor},c^{\delta(1+\lfloor\log_{c^\delta}u\rfloor)}\right)\\
 & =c^{\alpha-\frac{\delta}{2}}f\left(c^{\lfloor\log_{c^\delta}u\rfloor},c^{\delta\lfloor\log_ct\rfloor}\right)=c^{\alpha-\frac{\delta}{2}}f_c(t,u)
\end{align*}
showing that $f_c$ fulfills \eqref{DSS_funct_eq}.
\end{remark}

\section{Dilative semistability as a property of limit processes}

By Lamperti's Theorem 2 in \cite{Lam}, it is well known that selfsimilar stochastic processes $X=(X_t)_{t\geq0}$ can be characterized by limit theorems of the form
\begin{equation}\label{lamperti}
f(T)Y_{Tt}\fdd X_t\quad\text{ as }T\to\infty
\end{equation}
for some stochastic process $Y=(Y_t)_{t\geq0}$ and a necessarily regularly varying normalization function $f:(0,\infty)\to(0,\infty)$, where ``$\fdd$'' denotes convergence of all finite-dimensional distributions. Igl\'oi extended this characterization to dilatively stable processes $X$ in Theorem 2.2.7 of \cite{Igloi} by additionally introducing a convolution exponent $g(T)$ for the process $Y$ in \eqref{lamperti}. This requires infinite divisibility of the process $Y$ and, since dilatively stable processes in the sense of Igl\'oi are non-Gaussian with $X_0=0$ and have finite moments of arbitrary order, additionally in \cite{Igloi} a corresponding convergence for all cumulants is required. As mentioned in the Introduction, in this case the parameters $\alpha,\delta$ of dilative stability are uniquely determined and restricted to $\alpha>0$, $\delta\leq 2\alpha$, so that Igl\'oi was able to show that the scaling functions $f,g$ are necessarily regularly varying. In our setting, the parameters $\alpha,\delta$ are not necessarily unique. Hence we will have to assume regular variation of the appropriate normalization sequences but, due to a formulation in terms of aggregation schemes, we do not have to require infinite divisibility or finite moment conditions for the process $Y$. Recall that a positive sequence $(a_n)_{n\in\nat}\subseteq(0,\infty)$ is called regularly varying of index $\gamma\in\rr$ if for any $\lambda>0$ we have
$$\frac{a_{\lfloor\lambda n\rfloor}}{a_n}\to\lambda^\gamma\quad\text{ as }n\to\infty$$
and this convergence automatically holds uniformly on compact intervals of $\{\lambda>0\}$; e.g., see Corollary 4.2.11 in \cite{MMMHPS}. For short we will write $(a_n)_{n\in\nat}\in\RV(\gamma)$ and in case $\gamma=0$ the sequence is also called slowly varying.
\begin{thm}\label{dslt}
(a) Assume that for some $\alpha,\delta\in\rr$ there exist regularly varying sequences $(a_n)_{n\in\nat}\in\RV(\frac{\delta}{2}-\alpha)$ and $(b_n)_{n\in\nat}\subseteq\nat$ with $(b_n)_{n\in\nat}\in\RV(|\delta|)$, where in case $\delta=0$ we additionally assume $b_n\to\infty$, such that for some stochastic processes $X=(X_t)_{t\in\tim}$, $Y=(Y_t)_{t\in\tim}$ with $X$ being weakly continuous we have that for every $k\in\nat$ and $(t_1,\ldots,t_k)\in\tim^k$ one of the following two conditions for i.i.d.\ copies $(Y^{(i)})_{i\in\nat}$ of $Y$ is fulfilled.
\begin{itemize}
\item[\it (a1)] If $\delta\leq0$ the convergence
\begin{equation}\label{ltdl0}
a_n\sum_{i=1}^{b_n}\left(Y_{nt_1}^{(i)},\ldots,Y_{nt_k}^{(i)}\right)\fdc (X_{t_1},\ldots,X_{t_k})
\end{equation}
holds uniformly on compact subsets of the time parameters $(t_1,\ldots,t_k)\in\tim^k$.
\item[\it (a2)] If $\delta\geq0$ the convergence
\begin{equation}\label{ltdg0}
a_n^{-1}\sum_{i=1}^{b_n}\left(Y_{t_1/n}^{(i)},\ldots,Y_{t_1/n}^{(i)}\right)\fdc (X_{t_1},\ldots,X_{t_k})
\end{equation}
holds uniformly on compact subsets of the time parameters $(t_1,\ldots,t_k)\in\tim^k$.
\end{itemize}
Then $X$ is $(\alpha,\delta)$-dilatively stable.

(b) Conversely, if $X=(X_t)_{t\in\tim}$ is a weakly continuous $(\alpha,\delta)$-dilatively stable process for some $\alpha,\delta\in\rr$ then \eqref{ltdl0} in case $\delta\leq0$, respectively \eqref{ltdg0} in case $\delta\geq0$, holds uniformly on compact subsets of the time parameters $(t_1,\ldots,t_k)\in\tim^k$ for the sequences $a_n=n^{\frac{\delta}{2}-\alpha}$ and $b_n=\lfloor n^{|\delta|}\rfloor$, where now $(Y^{(i)})_{i\in\nat}$ are i.i.d.\ copies of $X$. 
\end{thm}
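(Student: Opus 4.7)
The theorem has two directions, which I would tackle in opposite order since (b) is essentially a clean scaling calculation while (a) requires more regular-variation bookkeeping.

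For (b), take $Y=X$ and use i.i.d.\ copies. The log-characteristic function of the aggregation in \eqref{ltdl0} equals
\[
b_n\,\psi_{nt_1,\ldots,nt_k}(a_n\theta_1,\ldots,a_n\theta_k).
\]
Apply the $(\alpha,\delta)$-dilative stability scaling with $T=n$. The choice $a_n=n^{\delta/2-\alpha}$ is engineered so that $n^{\alpha-\delta/2}a_n=1$, collapsing this expression to $b_n\,n^\delta\,\psi_{t_1,\ldots,t_k}(\theta_1,\ldots,\theta_k)$. Since $b_n=\lfloor n^{-\delta}\rfloor$ in the case $\delta\le 0$, the prefactor $b_n n^\delta\to 1$ (and it equals $1$ throughout when $\delta=0$, in which case the aggregation degenerates to a single term and the statement reduces to Lamperti's selfsimilarity theorem). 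Weak continuity of $X$ makes $\psi_{t_1,\ldots,t_k}$ continuous in the time variables, and since the prefactor is independent of them, the convergence is automatically uniform on compact subsets. The case $\delta\ge 0$ is symmetric, applying the scaling with $T=1/n$ to \eqref{ltdg0}.

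For (a), fix $T>0$ and set $m_n=\lfloor Tn\rfloor$ in the case $\delta\le 0$ (otherwise use $m_n=\lfloor n/T\rfloor$ in combination with \eqref{ltdg0}). Writing
\[
G_n(\theta_1,\ldots,\theta_k;\,t_1,\ldots,t_k)=\Bigl(\Exp\Bigl[\exp\Bigl(i\sum_{j=1}^k a_n\theta_j\,Y_{nt_j}\Bigr)\Bigr]\Bigr)^{b_n},
\]
hypothesis \eqref{ltdl0} reads $G_n\to\exp(\psi_{t_1,\ldots,t_k})$ uniformly on compact subsets of the time variables. Introducing $s_j^{(n)}=(nT/m_n)t_j$, which tends to $t_j$, the identity $nTt_j=m_n s_j^{(n)}$ gives
\[
G_n(\theta_1,\ldots,\theta_k;\,Tt_1,\ldots,Tt_k)=G_{m_n}\Bigl(\frac{a_n}{a_{m_n}}\theta_1,\ldots,\frac{a_n}{a_{m_n}}\theta_k;\,s_1^{(n)},\ldots,s_k^{(n)}\Bigr)^{b_n/b_{m_n}}.
\]
Regular variation produces $a_n/a_{m_n}\to T^{\alpha-\delta/2}$ and $b_n/b_{m_n}\to T^{\delta}$. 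Passing to the limit on both sides yields
\[
\exp\bigl(\psi_{Tt_1,\ldots,Tt_k}(\theta_1,\ldots,\theta_k)\bigr)=\exp\bigl(T^{\delta}\,\psi_{t_1,\ldots,t_k}(T^{\alpha-\delta/2}\theta_1,\ldots,T^{\alpha-\delta/2}\theta_k)\bigr),
\]
i.e.\ $T\in D_X(\alpha,\delta)$, and since $T>0$ was arbitrary this proves $X$ is $(\alpha,\delta)$-dilatively stable.

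The hard part is justifying this passage to the limit inside $G_{m_n}$, where the $\theta$-argument is perturbed by $a_n/a_{m_n}$ and the time argument by $s_j^{(n)}\to t_j$ simultaneously, after which the resulting expression is raised to the fluctuating exponent $b_n/b_{m_n}$. The hypothesis provides uniform convergence in the time variables only for each fixed $\theta$, so I would upgrade to joint control by noting that the limit $\exp(\psi_{t_1,\ldots,t_k}(\theta_1,\ldots,\theta_k))$ is jointly continuous in all variables (by weak continuity of $X$) and that the prelimit family of characteristic functions is equicontinuous on compact sets via the uniform tightness implied by the assumed convergence. A standard L\'evy-type argument then controls the joint perturbation, after which everything else is routine bookkeeping with regularly varying sequences.
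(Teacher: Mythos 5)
Your part (b) is correct and coincides with the paper's argument. In part (a) your skeleton is also the paper's: compare the limit along the rescaled times $Tt_j$ with the limit obtained by reindexing through $m_n=\lfloor nT\rfloor$ and using regular variation of $(a_n)$ and $(b_n)$. But the execution through $G_n=\nu^{b_n}$ has a genuine gap that your closing paragraph does not address. The identity
\[
G_n(\theta;Tt)=G_{m_n}\Bigl(\tfrac{a_n}{a_{m_n}}\theta;\,s^{(n)}\Bigr)^{b_n/b_{m_n}}
\]
raises a complex number to the non-integer power $b_n/b_{m_n}$, and $(z^{b_{m_n}})^{b_n/b_{m_n}}\neq z^{b_n}$ for complex $z$ under any fixed branch convention. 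Even granting a branch, if $G_{m_n}\to e^{\psi}$ then the (principal) logarithm only satisfies $\log G_{m_n}\to\psi+2\pi i\,\ell$ for some integer $\ell$, so the limit of $G_{m_n}^{b_n/b_{m_n}}$ is $e^{T^{\delta}\psi}e^{2\pi i\,\ell T^{\delta}}$, which differs from $e^{T^{\delta}\psi}$ by a phase whenever $T^{\delta}\notin\ganz$. Your proposed fix (equicontinuity plus a L\'evy-type argument) handles the simultaneous perturbation of the $\theta$- and time-arguments, which is indeed needed, but it does nothing about this branch/winding problem, and ``routine bookkeeping'' does not dispose of it. The paper's proof is structured precisely to avoid it: it first establishes infinitesimality $\nu_{nt^{(n)}}(a_n\theta^{(n)})\to1$, tracks the $2\pi i\,m_n$ ambiguity of the logarithm explicitly, and then passes to the accompanying compound-Poisson laws, i.e.\ to the equivalent additive statement $b_n\bigl(\nu_{nt_1,\ldots,nt_k}(a_n\theta_1,\ldots,a_n\theta_k)-1\bigr)\to\psi_{t_1,\ldots,t_k}(\theta_1,\ldots,\theta_k)$ uniformly on compacts in $(t,\theta)$ (Lemma 3.1.10 in \cite{MMMHPS}). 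In that form the exponent ratio $b_n/b_{m_n}\to T^{\delta}$ enters only as a real multiplicative factor and the regular-variation comparison goes through without any branch issues. You would need either to reproduce this accompanying-laws reduction or to supply an explicit argument controlling the winding number of $\nu^{b_n}$.

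A second, smaller omission: the conclusion ``$X$ is $(\alpha,\delta)$-dilatively stable'' includes by definition that all finite-dimensional marginals of $X$ are infinitely divisible. You never prove this, and it is exactly where the hypothesis $b_n\to\infty$ in the case $\delta=0$ is used; the paper obtains it from the infinitesimal triangular array criterion (Lemma 1.6.1(b) in \cite{HS}). Without infinite divisibility the scaling identity for $\psi$ does not by itself yield the asserted conclusion.
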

\begin{remark}
Note that in case $\delta=0$ we additionally assume $b_n\to\infty$ for the slowly varying sequence $(b_n)_{n\in\nat}\subseteq\nat$ in part (a) in order to be able to conclude infinite divisibility of $X$. Since the case $\delta=0$ belongs to selfsimilar limit processes and a bounded sequence $(b_n)_{n\in\nat}\subseteq\nat$ has an eventually constant subsequence, in view of the corresponding result in Theorem 2 of \cite{Lam} the assumption $b_n\to\infty$ entails no loss of generality. The same remark holds true for semi-selfsimilar limit processes in case $\delta=0$ of Theorem \ref{dsslt}(a) below. The corresponding result for eventually constant sequences $(b_n)_{n\in\nat}\subseteq\nat$ is given by Theorem 2 in \cite{MaeSat1}.
\end{remark}
\begin{proof}[Proof of Theorem \ref{dslt}]
(a) We first consider the case $\delta\leq0$. Since $X$ is assumed to be weakly continuous, \eqref{ltdl0} is equivalent to
\begin{equation}\label{ltdl01}
a_n\sum_{i=1}^{b_n}\left(Y_{nt_1^{(n)}}^{(i)},\ldots,Y_{nt_k^{(n)}}^{(i)}\right)\fdc (X_{t_1},\ldots,X_{t_k})
\end{equation}
for all sequences $(t_1^{(n)},\ldots,t_k^{(n)})\to(t_1,\ldots,t_k)\in\tim^k$. Hence the distribution of $(X_{t_1},\ldots,X_{t_k})$ is infinitely divisible by Lemma 1.6.1(b) in \cite{HS}. Let $\psi_{t_1,\ldots,t_k}$ denote the log-characteristic function of $(X_{t_1},\ldots,X_{t_k})$ and let $\nu_{t_1,\ldots,t_k}$ be the characteristic function of $(Y_{t_1},\ldots,Y_{t_k})$. Then
by L\'evy's continuity theorem, \eqref{ltdl01} can be equivalently formulated as
\begin{equation}\label{ltdl0ft}
\left(\nu_{nt_1^{(n)},\ldots,nt_k^{(n)}}\big(a_n\theta_1^{(n)},\ldots,a_n\theta_k^{(n)}\big)\right)^{b_n}\to\exp\left(\psi_{t_1,\ldots,t_k}(\theta_1,\ldots,\theta_k)\right)
\end{equation}
for all sequences $(t_1^{(n)},\ldots,t_k^{(n)},\theta_1^{(n)},\ldots,\theta_k^{(n)})\to(t_1,\ldots,t_k,\theta_1,\ldots,\theta_k)\in\tim^k\times\rr^k$. Moreover, due to Lemma 1.6.1(a) in \cite{HS} we have $\nu_{nt_1^{(n)},\ldots,nt_k^{(n)}}\big(a_n\theta_1^{(n)},\ldots,a_n\theta_k^{(n)}\big)\to1$ and hence with the principal branch of the complex logarithm we get as $n\to\infty$
$$\log\left(\nu_{nt_1^{(n)},\ldots,nt_k^{(n)}}\big(a_n\theta_1^{(n)},\ldots,a_n\theta_k^{(n)}\big)\right)\sim\nu_{nt_1^{(n)},\ldots,nt_k^{(n)}}\big(a_n\theta_1^{(n)},\ldots,a_n\theta_k^{(n)}\big)-1.$$
Further, we have for sufficiently large $n\in\nat$
\begin{align*}
& \log\left(\left(\nu_{nt_1^{(n)},\ldots,nt_k^{(n)}}\big(a_n\theta_1^{(n)},\ldots,a_n\theta_k^{(n)}\big)\right)^{b_n}\right)\\
&\quad=b_n\log\left(\nu_{nt_1^{(n)},\ldots,nt_k^{(n)}}\big(a_n\theta_1^{(n)},\ldots,a_n\theta_k^{(n)}\big)\right)+2\pi i\,m_n
\end{align*}
for some sequence $m_n\in\ganz$. Hence, as $n\to\infty$ it follows by \eqref{ltdl0ft}
\begin{equation}\label{acceq}\begin{split}
& \exp\left(b_n\left(\nu_{nt_1^{(n)},\ldots,nt_k^{(n)}}\big(a_n\theta_1^{(n)},\ldots,a_n\theta_k^{(n)}\big)-1\right)\right)\\
& \quad\sim\exp\left(b_n\log\left(\nu_{nt_1^{(n)},\ldots,nt_k^{(n)}}\big(a_n\theta_1^{(n)},\ldots,a_n\theta_k^{(n)}\big)\right)\right)\\
& \quad=\exp\left(b_n\log\left(\nu_{nt_1^{(n)},\ldots,nt_k^{(n)}}\big(a_n\theta_1^{(n)},\ldots,a_n\theta_k^{(n)}\big)\right)+2\pi i\,m_n\right)\\
& \quad=\exp\left(\log\left(\left(\nu_{nt_1^{(n)},\ldots,nt_k^{(n)}}\big(a_n\theta_1^{(n)},\ldots,a_n\theta_k^{(n)}\big)\right)^{b_n}\right)\right)\\
& \quad=\left(\nu_{nt_1^{(n)},\ldots,nt_k^{(n)}}\big(a_n\theta_1^{(n)},\ldots,a_n\theta_k^{(n)}\big)\right)^{b_n}\\
& \quad\to\exp\left(\psi_{t_1,\ldots,t_k}(\theta_1,\ldots,\theta_k)\right)
\end{split}\end{equation}
for all sequences $(t_1^{(n)},\ldots,t_k^{(n)},\theta_1^{(n)},\ldots,\theta_k^{(n)})\to(t_1,\ldots,t_k,\theta_1,\ldots,\theta_k)\in\tim^k\times\rr^k$. Note that the left-hand side of \eqref{acceq} is the Fourier transform of an infinitly divisible compound Poisson distribution; e.g., see Definition 3.1.7 in \cite{MMMHPS}. Thus \eqref{acceq} is equivalent to
\begin{equation}\label{ltdl0acc}
b_n\left(\nu_{nt_1,\ldots,nt_k}(a_n\theta_1,\ldots,a_n\theta_k)-1\right)\to\psi_{t_1,\ldots,t_k}(\theta_1,\ldots,\theta_k)
\end{equation}
uniformly on compact subsets of $(t_1,\ldots,t_k,\theta_1,\ldots,\theta_k)\in\tim^k\times\rr^k$; e.g., see Lemma 3.1.10 in \cite{MMMHPS}. Hence for every $T>0$ we get
\begin{equation}\label{ltft1}
b_n\left(\nu_{nTt_1,\ldots,nTt_k}(a_n\theta_1,\ldots,a_n\theta_k)-1\right)\to\psi_{Tt_1,\ldots,Tt_k}(\theta_1,\ldots,\theta_k).
\end{equation}
On the other hand we have by \eqref{ltdl0acc} and regular variation
\begin{equation}\label{ltft2}\begin{split}
& b_n\left(\nu_{nTt_1,\ldots,nTt_k}(a_n\theta_1,\ldots,a_n\theta_k)-1\right)\\
& \quad=\frac{b_n}{b_{\lfloor nT\rfloor}}\,b_{\lfloor nT\rfloor}\left(\nu_{\lfloor nT\rfloor\frac{nTt_1}{\lfloor nT\rfloor},\ldots,\lfloor nT\rfloor\frac{nTt_k}{\lfloor nT\rfloor}}\left(a_{\lfloor nT\rfloor}\frac{a_n}{a_{\lfloor nT\rfloor}}\theta_1,\ldots,a_{\lfloor nT\rfloor}\frac{a_n}{a_{\lfloor nT\rfloor}}\theta_k\right)-1\right)\\
& \quad\to T^\delta\psi_{t_1,\ldots,t_k}\left(T^{\alpha-\frac{\delta}{2}}\theta_1,\ldots,T^{\alpha-\frac{\delta}{2}}\theta_k\right).
\end{split}\end{equation}
A comparison of \eqref{ltft1} and \eqref{ltft2} shows that $T\in D_X(\alpha,\delta)$ for any $T>0$ and thus $X$ is $(\alpha,\delta)$-dilatively stable.

In case $\delta\geq0$, similarly we get by \eqref{ltdg0} that the distribution of $(X_{t_1},\ldots,X_{t_k})$ is infinitely divisible and 
\begin{equation}\label{jhjh}
b_n\left(\nu_{t_1/n,\ldots,t_k/n}(a_n^{-1}\theta_1,\ldots,a_n^{-1}\theta_k)-1\right)\to\psi_{t_1,\ldots,t_k}(\theta_1,\ldots,\theta_k)
\end{equation}
holds uniformly on compact subsets of $(t_1,\ldots,t_k,\theta_1,\ldots,\theta_k)\in\tim^k\times\rr^k$.
Hence for every $T>0$ we get
\begin{equation*}
b_n\left(\nu_{Tt_1/n,\ldots,Tt_k/n}(a_n^{-1}\theta_1,\ldots,a_n^{-1}\theta_k)-1\right)\to\psi_{Tt_1,\ldots,Tt_k}(\theta_1,\ldots,\theta_k),
\end{equation*}
and on the other hand for $n>T$ we have by \eqref{jhjh} and regular variation
\begin{equation*}\begin{split}
& b_n\left(\nu_{Tt_1/n,\ldots,Tt_k/n}(a_n^{-1}\theta_1,\ldots,a_n^{-1}\theta_k)-1\right)\\
& \quad=\frac{b_n}{b_{\lfloor \frac{n}{T}\rfloor}}\,b_{\lfloor \frac{n}{T}\rfloor}\left(\nu_{t_1\frac{T\lfloor \frac{n}{T}\rfloor}{n}/\lfloor \frac{n}{T}\rfloor,\ldots,t_k\frac{T\lfloor \frac{n}{T}\rfloor}{n}/\lfloor \frac{n}{T}\rfloor}\left(a_{\lfloor \frac{n}{T}\rfloor}^{-1}\frac{a_{\lfloor \frac{n}{T}\rfloor}}{a_n}\theta_1,\ldots,a_{\lfloor \frac{n}{T}\rfloor}^{-1}\frac{a_{\lfloor \frac{n}{T}\rfloor}}{a_n}\theta_k\right)-1\right)\\
& \quad\to T^\delta\psi_{t_1,\ldots,t_k}(T^{\alpha-\frac{\delta}{2}}\theta_1,\ldots,T^{\alpha-\frac{\delta}{2}}\theta_k),
\end{split}\end{equation*}
showing again that $X$ is $(\alpha,\delta)$-dilatively stable.

(b) We have for $\delta\leq0$ using that $n\in D_X(\alpha,\delta)$ 
\begin{align*}
\lfloor n^{-\delta}\rfloor\psi_{nt_1,\ldots,nt_k}(n^{\frac{\delta}{2}-\alpha}\theta_1,\ldots,n^{\frac{\delta}{2}-\alpha}\theta_k) & =\lfloor n^{-\delta}\rfloor n^\delta\psi_{t_1,\ldots,t_k}(\theta_1,\ldots,\theta_k)\\
& \to\psi_{t_1,\ldots,t_k}(\theta_1,\ldots,\theta_k),
\end{align*}
and for $\delta\geq0$ using that $1/n\in D_X(\alpha,\delta)$
\begin{align*}
\lfloor n^{\delta}\rfloor\psi_{t_1/n,\ldots,t_k/n}(n^{\alpha-\frac{\delta}{2}}\theta_1,\ldots,n^{\alpha-\frac{\delta}{2}}\theta_k) & =\lfloor n^{\delta}\rfloor n^{-\delta}\psi_{t_1,\ldots,t_k}(\theta_1,\ldots,\theta_k)\\
& \to\psi_{t_1,\ldots,t_k}(\theta_1,\ldots,\theta_k).
\end{align*}
Since $X$ is weakly continuous, this shows that \eqref{ltdl0} and \eqref{ltdg0} hold uniformly on compact subsets of $(t_1,\ldots,t_k)\in\tim^k$ with the proposed choices of sequences $(a_n)_{n\in\nat}$ and $(b_n)_{n\in\nat}$ and with i.i.d.\ copies $(Y^{(i)})_{i\in\nat}$ of $X$.
\end{proof}
\begin{example}
An explicit example of a limit theorem of the form \eqref{ltdl0} is given by Pilipauskait\.{e} and Surgailis \cite{PS}. They consider the aggregation of
$$Y_t^{(i)}=\sum_{k=1}^{\lfloor t\rfloor}X_i(k),\quad t\geq0,$$
for certain i.i.d.\ stationary random coefficient AR(1) processes $(X_i)_{i\in\nat}$, where the random coefficient depends on a parameter $\beta\in(-1,1)$. In Theorem 2.2 of \cite{PS} it is particularly shown that for any $\beta\in(-1,1)$, $k\in\nat$ and $(t_1,\ldots,t_k)\in\rr_+^k$
\begin{equation}\label{exPS} 
n^{-3/2}\sum_{i=1}^{\lfloor n^{1+\beta}\rfloor}\left(Y_{nt_1}^{(i)},\ldots,Y_{nt_k}^{(i)}\right)\fdc (Z_\beta(t_1),\ldots,Z_\beta(t_k)),
\end{equation}
where the limit process $Z_\beta=(Z_\beta(t))_{t\geq0}$ is infinitely divisible by Proposition 3.1 in \cite{PS} and given by the log-characteristic function
\begin{align*}
& \psi_{t_1,\ldots,t_k}(\theta_1,\ldots,\theta_k)\\
& \quad=C \int_0^\infty\bigg(\exp\bigg\{-\frac{1}{2}\int_\rr \Big(\sum_{j=1}^k\theta_j\,{\textstyle\frac{\big(1-e^{(s-t_j)x}\big)1_{\{s<t_j\}}-\big(1-e^{sx}\big)1_{\{s<0\}}}{x}}\Big)^2\dd s\bigg\} - 1\bigg)x^\beta\,\dd x
\end{align*}
for some constant $C>0$. The process $Z_\beta$ is already known to be $(1-\beta/2,-1-\beta)$-dilatively stable by Proposition 3.1 in \cite{BPK}. Note that $Z_\beta$ is weakly continuous which follows easily by dominated convergence applied to the above log-characteristic function. Hence, dilative stability of $Z_\beta$ also follows from our Theorem 3.1(a), provided that the convergence in \eqref{exPS} is uniformly on compact subsets of $(t_1,\ldots,t_k)\in\rr_+^k$. Due to the lengthy derivation of \eqref{exPS} in \cite{PS} we renounce to check this in detail.

A further example might be deduced from Theorem 2 in \cite{GaiKaj}, where it is known from section 3 of \cite{BPK} that the limit process $Y_\beta$ is $((3-\beta)/2,1-\beta)$-dilatively stable for any parameter $\beta\in(1,2)$, but the limit theorem presented in Theorem 2 of \cite{GaiKaj} is not precisely of the form \eqref{ltdl0}.
\end{example}
We finally turn to a generalization of Theorem \ref{dslt} for dilatively semistable stochastic processes.
\begin{thm}\label{dsslt}
(a) Assume that for some $\alpha,\delta\in\rr$ there exist regularly varying sequences $(a_n)_{n\in\nat}\in\RV(\frac{\delta}{2}-\alpha)$ and $(b_n)_{n\in\nat}\subseteq\nat$ with $(b_n)_{n\in\nat}\in\RV(|\delta|)$, where in case $\delta=0$ we additionally assume $b_n\to\infty$, such that for some deterministic sequence $(k(n))_{n\in\nat}\subseteq\nat$ with $k(n+1)/k(n)\to c>1$ and some stochastic processes $X=(X_t)_{t\in\tim}$, $Y=(Y_t)_{t\in\tim}$ with $X$ being weakly continuous we have
that for every $k\in\nat$ and $(t_1,\ldots,t_k)\in\tim^k$ one of the following two conditions for i.i.d.\ copies $(Y^{(i)})_{i\in\nat}$ of $Y$ is fulfilled.
\begin{itemize}
\item[\it (a1)] If $\delta\leq0$ the convergence
\begin{equation}\label{ltdl0ss}
a_{k(n)}\sum_{i=1}^{b_{k(n)}}\left(Y_{k(n)t_1}^{(i)},\ldots,Y_{k(n)t_k}^{(i)}\right)\fdc (X_{t_1},\ldots,X_{t_k})
\end{equation}
holds uniformly on compact subsets of the time parameters $(t_1,\ldots,t_k)\in\tim^k$.
\item[\it (a2)] If $\delta\geq0$ the convergence
\begin{equation}\label{ltdg0ss}
a_{k(n)}^{-1}\sum_{i=1}^{b_{k(n)}}\left(Y_{t_1/k(n)}^{(i)},\ldots,Y_{t_1/k(n)}^{(i)}\right)\fdc (X_{t_1},\ldots,X_{t_k})
\end{equation}
holds uniformly on compact subsets of the time parameters $(t_1,\ldots,t_k)\in\tim^k$.
\end{itemize}
Then $X$ is $(c,\alpha,\delta)$-dilatively semistable.

(b) Conversely, if $X=(X_t)_{t\in\tim}$ is a weakly continuous $(c,\alpha,\delta)$-dilatively semistable process for some $c>1$ and $\alpha,\delta\in\rr$ then \eqref{ltdl0ss} in case $\delta\leq0$, respectively \eqref{ltdg0ss} in case $\delta\geq0$, holds uniformly on compact subsets of $(t_1,\ldots,t_k)\in\tim^k$ for the sequences $a_n=n^{\frac{\delta}{2}-\alpha}$,  $b_n=\lfloor n^{|\delta|}\rfloor$ and $k(n)=\lfloor c^n\rfloor$, where now $(Y^{(i)})_{i\in\nat}$ are i.i.d.\ copies of $X$.
\end{thm}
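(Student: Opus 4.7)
The plan is to adapt the argument of Theorem \ref{dslt}, working everywhere along the subsequence $k(n)$ and exploiting the single asymptotic ratio $k(n+1)/k(n)\to c$ to extract one generator $c$ of $D_X(\alpha,\delta)$ in place of the full group $\G$.

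For part (a) I would first repeat verbatim the compound-Poisson linearisation from the proof of Theorem \ref{dslt}(a), with $n$ replaced throughout by $k(n)$. This yields infinite divisibility of the finite-dimensional distributions of $X$ together with the accompanying-law convergence
\begin{equation*}
b_{k(n)}\bigl(\nu_{k(n)t_1,\ldots,k(n)t_k}(a_{k(n)}\theta_1,\ldots,a_{k(n)}\theta_k)-1\bigr)\longrightarrow\psi_{t_1,\ldots,t_k}(\theta_1,\ldots,\theta_k)
\end{equation*}
in case $\delta\leq0$ (and its analogue for $\delta\geq0$), uniformly on compact subsets of $\tim^k\times\rr^k$, where $\nu$ and $\psi$ denote the characteristic and log-characteristic functions of $Y$ and $X$ respectively.

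To produce $c\in D_X(\alpha,\delta)$ I would then evaluate the same convergence at the shifted index $k(n+1)$ and factor $k(n+1)=k(n)\cdot s_n$ with $s_n\to c$, $a_{k(n+1)}=a_{k(n)}\cdot u_n$ with $u_n\to c^{\delta/2-\alpha}$, and $b_{k(n+1)}/b_{k(n)}\to c^{|\delta|}$, all three by regular variation combined with $k(n+1)/k(n)\to c$. Uniformity on compacts in both the time and the frequency arguments then lets me insert the perturbations $s_nt_j\to ct_j$ and $u_n\theta_j\to c^{\delta/2-\alpha}\theta_j$ into the first-step limit; equating the two limit evaluations of the single sequence $b_{k(n+1)}(\nu_{k(n+1)t_1,\ldots}(a_{k(n+1)}\theta_1,\ldots)-1)$ yields
\begin{equation*}
\psi_{t_1,\ldots,t_k}(\theta_1,\ldots,\theta_k)=c^{|\delta|}\psi_{ct_1,\ldots,ct_k}(c^{\delta/2-\alpha}\theta_1,\ldots,c^{\delta/2-\alpha}\theta_k),
\end{equation*}
which (using $|\delta|=\pm\delta$ according to the sign and substituting $\theta_j\mapsto c^{\alpha-\delta/2}\theta_j$) rearranges to $c\in D_X(\alpha,\delta)$. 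Proposition \ref{ddg} then upgrades this to $c^\ganz\subseteq D_X(\alpha,\delta)$. The case $\delta\geq0$ is strictly parallel, starting from \eqref{ltdg0ss}.

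For part (b), with $Y^{(i)}$ i.i.d.\ copies of $X$, L\'evy's continuity theorem reduces \eqref{ltdl0ss} (resp.\ \eqref{ltdg0ss}) to the uniform-on-compacts convergence $b_{k(n)}\,\psi_{k(n)t_1,\ldots}(a_{k(n)}\theta_1,\ldots)\to\psi_{t_1,\ldots,t_k}(\theta_1,\ldots,\theta_k)$. Since $c^n\in D_X(\alpha,\delta)$ for every $n\in\ganz$, I would write $k(n)t_j=c^n t_j^{(n)}$ with $t_j^{(n)}=k(n)t_j/c^n\to t_j$ and apply the scaling identity for the element $c^n$; with the explicit choices $a_n=n^{\delta/2-\alpha}$, $b_n=\lfloor n^{|\delta|}\rfloor$ and $k(n)=\lfloor c^n\rfloor$, the expression reduces to $(b_{k(n)}c^{n\delta})\,\psi_{t_1^{(n)},\ldots}(\varepsilon_n\theta_1,\ldots)$ with $\varepsilon_n\to1$ and $b_{k(n)}c^{n\delta}\to1$, and joint continuity of $\psi_{t,\theta}$ in its arguments (a consequence of weak continuity of $X$, as in the proof of Proposition \ref{ddg}) then completes the verification. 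The main obstacle is the last step in part (a), where one must simultaneously absorb $s_n$ and $u_n$ into the $t$- and $\theta$-slots of $\nu$ and the factor $b_{k(n+1)}/b_{k(n)}$ into the normalisation; this works only because the accompanying-law limit from the first step was established uniformly on compact sets in both time and frequency.
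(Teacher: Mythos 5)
Your proposal matches the paper's proof essentially step for step: the same accompanying-laws reduction along the subsequence $k(n)$, the same double evaluation of a single comparison sequence (once directly, once after factoring out $k(n+1)/k(n)\to c$ and the regular-variation ratios of $a$ and $b$) to extract $c\in D_X(\alpha,\delta)$ and hence $c^\ganz\subseteq D_X(\alpha,\delta)$ via Proposition \ref{ddg}, and the same use of $c^{\pm n}\in D_X(\alpha,\delta)$ together with joint continuity of $\psi$ in part (b). The only cosmetic difference is that you anchor the comparison sequence at index $n+1$ (normalising by $b_{k(n+1)}$, $a_{k(n+1)}$) whereas the paper anchors it at $n$; this is immaterial given the uniformity on compacts in both time and frequency.
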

\begin{proof}
(a) As in the proof of Theorem \ref{dslt} $X$ is infinitely divisible and it follows from the weak continuity of $X$ and \eqref{ltdl0ss} that in case $\delta\leq0$
\begin{equation}\label{ltdl0accss}
b_{k(n)}\left(\nu_{k(n)t_1,\ldots,k(n)t_k}(a_{k(n)}\theta_1,\ldots,a_{k(n)}\theta_k)-1\right)\to\psi_{t_1,\ldots,t_k}(\theta_1,\ldots,\theta_k)
\end{equation}
uniformly on compact subsets of $(t_1,\ldots,t_k,\theta_1,\ldots,\theta_k)\in\tim^k\times\rr^k$. Hence we get
\begin{align*}
& b_{k(n)}\left(\nu_{k(n+1)t_1,\ldots,k(n+1)t_k}(a_{k(n)}\theta_1,\ldots,a_{k(n)}\theta_k)-1\right)\\
& \quad=b_{k(n)}\left(\nu_{k(n)\,\frac{k(n+1)}{k(n)}t_1,\ldots,k(n)\,\frac{k(n+1)}{k(n)}t_k}(a_{k(n)}\theta_1,\ldots,a_{k(n)}\theta_k)-1\right)\\
& \quad\to\psi_{ct_1,\ldots,ct_k}(\theta_1,\ldots,\theta_k).
\end{align*}
On the other hand we have by \eqref{ltdl0accss} and regular variation
\begin{align*}
& b_{k(n)}\left(\nu_{k(n+1)t_1,\ldots,k(n+1)t_k}(a_{k(n)}\theta_1,\ldots,a_{k(n)}\theta_k)-1\right)\\
& \,=\frac{b_{k(n)}}{b_{k(n+1)}}\,b_{k(n+1)}\left(\nu_{k(n+1)t_1,\ldots,k(n+1)t_k}\left(a_{k(n+1)}\frac{a_{k(n)}}{a_{k(n+1)}}\theta_1,\ldots,a_{k(n+1)}\frac{a_{k(n)}}{a_{k(n+1)}}\theta_k\right)-1\right)\\
& \,\to c^\delta\psi_{t_1,\ldots,t_k}\left(c^{\alpha-\frac{\delta}{2}}\theta_1,\ldots,c^{\alpha-\frac{\delta}{2}}\theta_k\right),
\end{align*}
showing that $c\in D_X(\alpha,\delta)$ and thus $X$ is $(c,\alpha,\delta)$-dilatively semistable.

In case $\delta\geq0$, similarly we get by \eqref{ltdg0ss}
\begin{equation}\label{ltdl0accss1}
b_{k(n)}\left(\nu_{t_1/k(n),\ldots,t_k/k(n)}(a_{k(n)}^{-1}\theta_1,\ldots,a_{k(n)}^{-1}\theta_k)-1\right)\to\psi_{t_1,\ldots,t_k}(\theta_1,\ldots,\theta_k)
\end{equation}
uniformly on compact subsets of $(t_1,\ldots,t_k,\theta_1,\ldots,\theta_k)\in\tim^k\times\rr^k$. Hence we get
\begin{align*}
& b_{k(n+1)}\left(\nu_{t_1/k(n),\ldots,t_k/k(n)}(a_{k(n+1)}^{-1}\theta_1,\ldots,a_{k(n+1)}^{-1}\theta_k)-1\right)\\
& \quad=b_{k(n+1)}\left(\nu_{\frac{k(n+1)}{k(n)}t_1/k(n),\ldots,\frac{k(n+1)}{k(n)}t_k/k(n)}(a_{k(n+1)}^{-1}\theta_1,\ldots,a_{k(n+1)}^{-1}\theta_k)-1\right)\\
& \quad\to\psi_{ct_1,\ldots,ct_k}(\theta_1,\ldots,\theta_k),
\end{align*}
and on the other hand for $n>T$ we have by \eqref{ltdl0accss1} and regular variation
\begin{align*}
& b_{k(n+1)}\left(\nu_{t_1/k(n),\ldots,t_k/k(n)}(a_{k(n+1)}^{-1}\theta_1,\ldots,a_{k(n+1)}^{-1}\theta_k)-1\right)\\
& \,=\frac{b_{k(n+1)}}{b_{k(n)}}\,b_{k(n)}\left(\nu_{t_1/k(n),\ldots,t_k/k(n)}\left(a_{k(n)}^{-1}\frac{a_{k(n)}}{a_{k(n+1)}}\theta_1,\ldots,a_{k(n)}^{-1}\frac{a_{k(n)}}{a_{k(n+1)}}\theta_k\right)-1\right)\\
& \,\to c^\delta\psi_{t_1,\ldots,t_k}\left(c^{\alpha-\frac{\delta}{2}}\theta_1,\ldots,c^{\alpha-\frac{\delta}{2}}\theta_k\right),
\end{align*}
showing again that $X$ is $(c,\alpha,\delta)$-dilatively semistable.

(b) We have for $\delta\leq0$ using that $c^n\in D_X(\alpha,\delta)$ 
\begin{align*}
& \lfloor\lfloor c^n\rfloor^{-\delta}\rfloor\psi_{\lfloor c^n\rfloor t_1,\ldots,\lfloor c^n\rfloor t_k}(\lfloor c^n\rfloor^{\frac{\delta}{2}-\alpha}\theta_1,\ldots,\lfloor c^n\rfloor^{\frac{\delta}{2}-\alpha}\theta_k)\\ 
& \quad=\lfloor\lfloor c^n\rfloor^{-\delta}\rfloor c^{n\delta}\psi_{\frac{\lfloor c^n\rfloor}{c^n}t_1,\ldots,\frac{\lfloor c^n\rfloor}{c^n}t_k}\left(\left(\frac{\lfloor c^n\rfloor}{c^n}\right)^{\frac{\delta}{2}-\alpha}\theta_1,\ldots,\left(\frac{\lfloor c^n\rfloor}{c^n}\right)^{\frac{\delta}{2}-\alpha}\theta_k\right)\\
& \quad\to\psi_{t_1,\ldots,t_k}(\theta_1,\ldots,\theta_k),
\end{align*}
and for $\delta\geq0$ using that $c^{-n}\in D_X(\alpha,\delta)$ 
\begin{align*}
& \lfloor\lfloor c^n\rfloor^{\delta}\rfloor\psi_{t_1/\lfloor c^n\rfloor,\ldots,t_k/\lfloor c^n\rfloor}(\lfloor c^n\rfloor^{\alpha-\frac{\delta}{2}}\theta_1,\ldots,\lfloor c^n\rfloor^{\alpha-\frac{\delta}{2}}\theta_k)\\ 
& \quad=\lfloor\lfloor c^n\rfloor^{\delta}\rfloor c^{-n\delta}\psi_{\frac{c^n}{\lfloor c^n\rfloor}t_1,\ldots,\frac{c^n}{\lfloor c^n\rfloor}t_k}\left(\left(\frac{\lfloor c^n\rfloor}{c^n}\right)^{\alpha-\frac{\delta}{2}}\theta_1,\ldots,\left(\frac{\lfloor c^n\rfloor}{c^n}\right)^{\alpha-\frac{\delta}{2}}\theta_k\right)\\
& \quad\to\psi_{t_1,\ldots,t_k}(\theta_1,\ldots,\theta_k),
\end{align*}
showing that \eqref{ltdl0ss} and \eqref{ltdg0ss} hold uniformly on compact subsets of $(t_1,\ldots,t_k)\in\tim^k$ with the proposed choices of sequences $(a_n)_{n\in\nat}$, $(b_n)_{n\in\nat}$ and $(k(n))_{n\in\nat}$ and with i.i.d.\ copies $(Y^{(i)})_{i\in\nat}$ of $X$.
\end{proof}
\noindent {\bf Acknowledgement.} We are grateful to M\'aty\'as Barczy and Gyula Pap for stimulating and helpful discussions on earlier versions of this manuscript.

\bibliographystyle{plain}

\end{document}